\newtheorem*{thm*}{Theorem}
\newtheorem{thm}{Theorem}
\newtheorem{lem}[thm]{Lemma}
\newtheorem{cor}[thm]{Corollary}
\newtheorem{conj}[thm]{Conjecture}
\newtheorem{ques}[thm]{Question}
\newcommand{\N}{\mathbb{N}}
\begin{document}

\title{A Note on the Equitable Choosability of Complete Bipartite Graphs}

\author{Jeffrey A. Mudrock\footnotemark[1], Madelynn Chase\footnotemark[1], Isaac Kadera\footnotemark[1], Ezekiel Thornburgh\footnotemark[1], and Tim Wagstrom\footnotemark[1]}

\footnotetext[1]{Department of Mathematics, College of Lake County, Grayslake, IL 60030.  E-mail:  {\tt {jmudrock@clcillinois.edu}}}

\date{2018}

\maketitle

\begin{abstract}

In 2003 Kostochka, Pelsmajer, and West introduced a list analogue of equitable coloring called equitable choosability.  A $k$-assignment, $L$, for a graph $G$ assigns a list, $L(v)$, of $k$ available colors to each $v \in V(G)$, and an equitable $L$-coloring of $G$ is a proper coloring, $f$, of $G$ such that $f(v) \in L(v)$ for each $v \in V(G)$ and each color class of $f$ has size at most $\lceil |V(G)|/k \rceil$.  Graph $G$ is said to be equitably $k$-choosable if an equitable $L$-coloring of $G$ exists whenever $L$ is a $k$-assignment for $G$.  In this note we study the equitable choosability of complete bipartite graphs.  A result of Kostochka, Pelsmajer, and West implies $K_{n,m}$ is equitably $k$-choosable if $k \geq \max \{n,m\}$ provided $K_{n,m} \neq K_{2l+1, 2l+1}$.  We prove $K_{n,m}$ is equitably $k$-choosable if $m \leq \left\lceil (m+n)/k \right \rceil(k-n)$ which gives $K_{n,m}$ is equitably $k$-choosable for certain $k$ satisfying $k < \max \{n,m\}$.  We also give a complete characterization of the equitable choosability of complete bipartite graphs that have a partite set of size at most 2.

\medskip

\noindent {\bf Keywords.} graph coloring, equitable coloring, list coloring, equitable choosability.

\noindent \textbf{Mathematics Subject Classification.} 05C15.

\end{abstract}

\section{Introduction}\label{intro}

All graphs in this note are assumed to be finite, simple graphs unless otherwise noted.  We use $K_{n,m}$ for the equivalence class of complete bipartite graphs with partite sets of size $n$ and $m$ where $n$ and $m$ are always positive integers.  Generally speaking we follow West~\cite{W01} for basic terminology and notation.  The focus of this note is the equitable choosability of complete bipartite graphs.  Equitable choosability is a list analogue of equitable coloring that was introduced by Kostochka, Pelsmajer, and West in 2003~\cite{KP03}.  Before introducing equitable choosability we quickly review equitable coloring and list coloring.

\subsection{Equitable Coloring and List Coloring}

The notion of equitable coloring was formally introduced by Meyer in 1973~\cite{M73}, but the study of equitable coloring began with a conjecture of Erd\H{o}s in 1964~\cite{E64} (see Theorem~\ref{thm: HS} below).  A proper $k$-coloring, $f$, of a graph $G$ is said to be an \emph{equitable $k$-coloring} if the $k$ color classes associated with $f$ differ in size by at most 1.  If $f$ is an equitable $k$-coloring of the graph $G$, it is easy to see that the size of each color class associated with $f$ must be $\lceil |V(G)|/k \rceil$ or $\lfloor |V(G)|/k \rfloor$.  We say that a graph $G$ is \emph{equitably $k$-colorable} if there exists an equitable $k$-coloring of $G$.  From an applied perspective, equitable colorings are useful when we wish to find a proper coloring of a graph without over or under-using any colors (see~\cite{JR02}, \cite{KJ06}, \cite{P01}, and~\cite{T73} for examples of applications). 

Unlike the typical vertex coloring problem, if a graph is equitably $k$-colorable, it need not be equitably $(k+1)$-colorable.  Indeed, $K_{2m+1,2m+1}$ is equitably $k$-colorable for each even $k$ less than $2m+1$, it is not equitably $(2m+1)$-colorable, and it is equitably $k$-colorable for each $k \geq 2m+2 = \Delta(K_{2m+1,2m+1})+1$ where we use $\Delta(G)$ to denote the largest degree of a vertex in $G$ (see~\cite{LW96} for further details).

In 1970 Hajn\'{a}l and Szemer\'{e}di proved Erd\H{o}s' 1964 conjecture.  In particular, they proved the following. 

\begin{thm}[\cite{HS70}] \label{thm: HS}
Every graph $G$ has an equitable $k$-coloring when $k \geq \Delta(G)+1$.
\end{thm}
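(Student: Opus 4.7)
This is the celebrated Hajn\'{a}l--Szemer\'{e}di theorem; any complete proof is substantial, and my plan follows the streamlined approach of Kierstead and Kostochka rather than the original combinatorial argument. The strategy is strong induction on $n = |V(G)|$. Given $G$ with $\Delta(G) \leq k-1$, write $n = qk + r$ with $0 \leq r < k$; then an equitable $k$-coloring is exactly a proper $k$-coloring with $r$ classes of size $q+1$ and $k-r$ classes of size $q$. The base case $n \leq k$ is trivial since any graph on at most $k$ vertices admits a proper $k$-coloring using each color at most once.

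For the inductive step, I would delete a vertex $v$, apply induction to $G - v$ to obtain an equitable $k$-coloring, and then reinsert $v$. Since $v$ has at most $k-1$ neighbors, some color class avoids $N(v)$ and can absorb $v$. This produces a proper $k$-coloring of $G$ that is almost equitable: one class, say $V_1$, might have size $q+2$ while another, say $V_2$, has size $q$, with all remaining classes of size $q$ or $q+1$. The remainder of the proof is dedicated to rebalancing this coloring, which is the core combinatorial difficulty.

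The main tool is the auxiliary digraph $H$ on the color classes in which $V_i \to V_\ell$ whenever some vertex of $V_i$ has no neighbor in $V_\ell$. A directed path from $V_1$ to $V_2$ in $H$ translates, by sequentially shifting one vertex per arc, into a sequence of recolorings that decreases $|V_1|$ by one and increases $|V_2|$ by one, producing an equitable coloring. The key lemma is that such a path must exist. Arguing by contradiction, let $A$ be the set of classes reachable from $V_1$ in $H$ and suppose $V_2 \notin A$. Then every $V_i \in A \setminus \{V_1\}$ has size exactly $q+1$, and every vertex in $\bigcup_{V_i \in A} V_i$ is adjacent to at least one vertex of every class outside $A$.

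The main obstacle is turning this reachability structure into a numerical contradiction with the degree bound $\Delta(G) \leq k-1$. One double-counts edges between $A$ and its complement: the degree bound gives an upper bound, while the reachability constraints give a matching lower bound that just barely exceeds it. The delicate step, and where Kierstead--Kostochka's innovation lies, is isolating the right ``solo'' vertices in $V_1$ to drive the count, since a naive tally does not close the gap; a small additional exchange argument handles the case $r = 0$, where one has to be careful about which class is designated as the undersized $V_2$. Once the contradiction is obtained, the path in $H$ exists, the rebalancing succeeds, and induction completes the proof.
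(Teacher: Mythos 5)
The paper does not prove this statement at all: it is the Hajn\'al--Szemer\'edi theorem, quoted verbatim from \cite{HS70} as background, so there is no in-paper argument to compare yours against. Judged on its own terms, your proposal correctly reproduces the architecture of the Kierstead--Kostochka proof (induction on $|V(G)|$, reinsertion of a deleted vertex, the auxiliary digraph $H$ on color classes with $V_i \to V_\ell$ when some vertex of $V_i$ has no neighbor in $V_\ell$, and rebalancing along a directed path from the oversized class to the undersized one), but it is an outline rather than a proof.

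The genuine gap is exactly where you flag it: the key lemma that a path from $V_1$ to $V_2$ exists in $H$. The reachability information you extract from the contrary assumption --- every class in $A \setminus \{V_1\}$ has size $q+1$, and every vertex of $\bigcup_{V_i \in A} V_i$ has a neighbor in each class outside $A$ --- gives each such vertex at least $k - |A|$ neighbors outside $\bigcup A$, hence at most $|A|-1$ inside, and the resulting edge count between $A$ and its complement is consistent with $\Delta(G) \leq k-1$; no contradiction follows. Closing this is the entire content of the Hajn\'al--Szemer\'edi theorem: Kierstead and Kostochka need a secondary induction (recoloring a carefully chosen subgraph with fewer colors), a classification of classes in $A$ by how many vertices can move into them, and the analysis of solo vertices, none of which is carried out here. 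Naming the missing ingredient (``isolating the right solo vertices'') is not the same as supplying it, so as written the proof would fail at the contradiction step.
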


In 1994 Chen, Lih, and Wu~\cite{CL94} conjectured that the result of Theorem~\ref{thm: HS} can be improved by 1 for most connected graphs.  Their conjecture is still open and is known as the $\Delta$-Equitable Coloring Conjecture ($\Delta$-ECC for short).   

\begin{conj}[\cite{CL94}, {\bf $\Delta$-ECC}] \label{conj: ECC}
A connected graph $G$ is equitably $\Delta(G)$-colorable if it is different from $K_m$, $C_{2m+1}$, and $K_{2m+1,2m+1}$.
\end{conj}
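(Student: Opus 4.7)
The statement above is the well-known $\Delta$-Equitable Coloring Conjecture, which is still open in full generality; what follows is a natural line of attack rather than a complete proof. The plan is a minimum-counterexample argument. Assume $G$ is a connected graph of smallest order that is not isomorphic to any $K_m$, $C_{2m+1}$, or $K_{2m+1,2m+1}$ and fails to be equitably $\Delta(G)$-colorable; put $\Delta = \Delta(G)$ and $n = |V(G)|$. By Theorem~\ref{thm: HS} there is an equitable $(\Delta+1)$-coloring with classes $V_1, \dots, V_{\Delta+1}$ whose sizes differ by at most one. The goal is to show that, unless $G$ belongs to the excluded list, one can transform such a coloring---via a sequence of Kempe-chain swaps---into an equitable $\Delta$-coloring.

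First I would dispose of the easy cases: small $\Delta$ (for which Brooks-type arguments together with the bipartite-graph theorem of Lih--Wu settle matters), and situations where some vertex $v$ has $\deg(v) < \Delta$, so that minimality of $G$ allows one to delete $v$, color $G-v$ equitably with $\Delta$ colors, reinsert $v$ using a color missing from its neighborhood, and then rebalance the class sizes by a short chain of moves along an auxiliary exchange digraph in the spirit of Kierstead--Kostochka's short proof of Theorem~\ref{thm: HS}. In the remaining case $G$ is $\Delta$-regular. I would next look among $V_1, \dots, V_{\Delta+1}$ for two classes whose union is independent; merging such a pair immediately produces a $\Delta$-coloring, and class-size balance can again be restored by the exchange procedure.

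The main obstacle is the last case: no two color classes can be merged, every vertex has full degree, and we still need an equitable $\Delta$-coloring. Here the structural rigidity of $G$ must be exploited. One would hope to show that if every attempted swap is blocked by a \emph{solo} neighbor (a vertex that is the unique neighbor in its class for some vertex of another class), then the union of the affected classes must form either a large clique, an odd cycle, or a balanced complete bipartite graph, and the $\Delta$-regularity and connectedness of $G$ then force $G$ to be one of the excluded graphs. Proving exactly this is where decades of work have stalled: partial results exist for bipartite, chordal, planar, and high-degree graphs, but the general case seems to require a genuinely new idea---perhaps a probabilistic absorption argument or a more refined local-to-global decomposition---beyond the reach of the exchange framework sketched above.
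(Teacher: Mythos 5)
You were asked to prove Conjecture~\ref{conj: ECC}, the $\Delta$-Equitable Coloring Conjecture of Chen, Lih, and Wu. The paper does not prove this statement: it explicitly says the conjecture is still open and only lists special classes (interval graphs, bipartite graphs, outerplanar graphs, subcubic graphs, certain planar graphs) for which it has been verified elsewhere. So there is no proof in the paper to compare yours against, and you are right to frame your write-up as a plan of attack rather than a completed argument.

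Judged as a proof, however, your proposal has a decisive gap, which you yourself identify: the final case, where $G$ is $\Delta$-regular, no two classes of the Hajn\'{a}l--Szemer\'{e}di $(\Delta+1)$-coloring have an independent union, and every attempted swap is blocked. Your hope that the blocking structure forces $G$ to be $K_m$, $C_{2m+1}$, or $K_{2m+1,2m+1}$ is precisely the content of the conjecture in its hard case, so the argument becomes circular there rather than incomplete in a repairable way. There are also smaller problems in the earlier reductions: deleting a vertex $v$ of degree less than $\Delta$ may disconnect $G$ or lower the maximum degree of $G-v$, so minimality of $G$ does not directly hand you an equitable $\Delta(G)$-coloring of $G-v$; and after reinserting $v$ with a color absent from its neighborhood, the ``rebalancing by exchange digraph'' step is exactly where Kempe-type techniques are known to break for \emph{equitable} colorings, since recoloring along a chain changes class sizes in ways that are hard to control. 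In short, your sketch is an honest description of the standard approach and of why it stalls, but it is not a proof, and the paper offers none either.
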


Conjecture~\ref{conj: ECC} has been proven true for interval graphs, bipartite graphs, outerplanar graphs, subcubic graphs, certain planar graphs, and several other classes of graphs (see~\cite{CL94}, \cite{DW18}, \cite{DZ18}, \cite{L98}, \cite{LW96} and~\cite{YZ97}).

List coloring is another variation on the classic vertex problem introduced independently by Vizing~\cite{V76} and Erd\H{o}s, Rubin, and Taylor~\cite{ET79} in the 1970's.  For list coloring we associate with a graph $G$ a \emph{list assignment}, $L$, that assigns to each vertex $v \in V(G)$ a list, $L(v)$, of available colors.  Graph $G$ is said to be \emph{$L$-colorable} if there exists a proper coloring $f$ of $G$ such that $f(v) \in L(v)$ for each $v \in V(G)$ (we refer to $f$ as a \emph{proper $L$-coloring} of $G$).  A list assignment $L$ is called a \emph{k-assignment} for $G$ if $|L(v)|=k$ for each $v \in V(G)$.  We say $G$ is \emph{k-choosable} if $G$ is $L$-colorable whenever $L$ is a $k$-assignment for $G$.   

\subsection{Equitable Choosability}

In 2003 Kostochka, Pelsmajer, and West introduced a list analogue of equitable coloring called equitable choosability~\cite{KP03}.  They use the word equitable to capture the idea that no color may be used excessively often.  Specifically, if $L$ is a $k$-assignment for the graph $G$, a proper $L$-coloring of $G$ is \emph{equitable} if each color appears on at most $\lceil |V(G)|/k \rceil$ vertices.  Such a coloring is called an \emph{equitable $L$-coloring} of $G$, and we call $G$ \emph{equitably $L$-colorable} when an equitable $L$-coloring of $G$ exists.  We say $G$ is \emph{equitably $k$-choosable} if $G$ is equitably $L$-colorable whenever $L$ is a $k$-assignment for $G$.  So, the upper bound on the number of times we are allowed to use a color in an equitable $L$-coloring is the same as the upper bound on the size of the color classes in an ordinary equitable $k$-coloring.  It is conjectured in~\cite{KP03} that Theorem~\ref{thm: HS} and the $\Delta$-ECC hold in the list context. 

\begin{conj}[\cite{KP03}] \label{conj: KPW1}
Every graph $G$ is equitably $k$-choosable when $k \geq \Delta(G)+1$.
\end{conj}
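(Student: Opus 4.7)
The plan is to emulate proofs of the Hajn\'{a}l-Szemer\'{e}di theorem (Theorem~\ref{thm: HS}) in the list setting, proceeding by induction on $|V(G)|$. Given a $k$-assignment $L$ with $k \geq \Delta(G)+1$, one hopes to locate a color $c$ together with an independent set $I \subseteq V(G)$ of size at most $\lceil |V(G)|/k \rceil$ such that $c \in L(v)$ for every $v \in I$. One then colors $I$ entirely with $c$ and applies induction to $G - I$ with the assignment $L'$ obtained by deleting $c$ from the lists of the remaining vertices that contained it. Provided such a pair $(c, I)$ can always be found, the resulting coloring is equitable by construction.

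First I would handle the trivial base case ($|V(G)| \leq k$, where distinct colors can be assigned greedily) and then try to establish existence of $(c,I)$ in the inductive step. A natural counting approach uses $\sum_{v \in V(G)} |L(v)| = k|V(G)|$: some color $c$ appears in at least $k|V(G)|/|\bigcup_v L(v)|$ lists. The induced subgraph on $\{v : c \in L(v)\}$ has maximum degree at most $\Delta(G) \leq k-1$, so by a Caro-Wei type bound it contains an independent set of size roughly $|\{v : c \in L(v)\}|/k$. When the palette $\bigcup_v L(v)$ is small, this is enough; one would also import the swapping/augmenting-path technique of Kierstead and Kostochka, rerouting colors along alternating structures whenever a greedy choice threatens to exceed the $\lceil |V(G)|/k \rceil$ quota.

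The main obstacle, and the reason this statement remains an open conjecture rather than a theorem, is twofold. First, the adversary can spread colors thinly across an enormous palette, in which case the pigeonhole estimate above provides no common color for a large independent set. Second, and more fundamentally, after one deletes $c$ from the lists of the remaining vertices, the inductive hypothesis no longer applies as stated: the new lists may have size $k-1$ while the maximum degree in $G - I$ can still be as large as $k-1$. To close this loop one would have to strengthen the inductive statement, for instance by tracking a vertexwise ``deficiency'' (the gap between list size and local degree) and proving a more flexible version that tolerates non-uniform list sizes. Integrating such a refined hypothesis with the global exchange arguments that power the Hajn\'{a}l-Szemer\'{e}di proof is precisely where the difficulty lies, and is why only restricted cases (bounded degree, special graph classes, or, as in the present note, complete bipartite graphs with one small side) have been settled so far.
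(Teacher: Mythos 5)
This statement is Conjecture~\ref{conj: KPW1}, an open conjecture attributed to Kostochka, Pelsmajer, and West; the paper offers no proof of it (nor does anyone else to date), so there is no argument of the authors' to compare yours against. Your submission is, by your own accurate admission, not a proof: it is a sketch of a natural inductive strategy together with a correct diagnosis of why that strategy does not close. You are right that the two obstacles you name are genuine. The pigeonhole step fails when the union of the lists is large, since no color need appear in more than a vanishing fraction of the lists, and the induction does not self-propagate because removing a color class of size $\lceil |V(G)|/k \rceil$ and deleting its color leaves lists of size $k-1$ on a graph whose maximum degree may still be $k-1$, so the hypothesis $k \geq \Delta + 1$ is lost. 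The partial results cited in the paper (forests, complete bipartite graphs, interval graphs, outerplanar and series-parallel graphs, and the Kierstead--Kostochka theorem for $\Delta(G) \leq 7$) each circumvent these obstacles using structure specific to the class at hand, which is consistent with your assessment that a strengthened, deficiency-tracking inductive statement would be needed in general. In short: you have correctly identified the statement as open and located the difficulty, but you have not proven it, and neither has the paper.
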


\begin{conj}[\cite{KP03}] \label{conj: KPW2}
A connected graph $G$ is equitably $k$-choosable for each $k \geq \Delta(G)$ if it is different from $K_m$, $C_{2m+1}$, and $K_{2m+1,2m+1}$.
\end{conj}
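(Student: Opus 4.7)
The plan is to attack the conjecture by strong induction on $|V(G)|$ with $k \geq \Delta(G)$ fixed, adapting the Kempe-swap machinery behind Hajn\'al--Szemer\'edi-type proofs to the list setting. Let $L$ be a $k$-assignment for $G$, set $N = |V(G)|$, and $q = \lceil N/k \rceil$ (the allowable class cap). The base cases cover small $N$ and $\Delta(G) \leq 2$; the latter reduces to disjoint paths and cycles, with $C_{2m+1}$ at $k = 2$ as the only exception.

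For the inductive step, I would first try to locate a vertex $v$ of degree at most $k-1$, equitably $L$-color $G - v$ by the inductive hypothesis, and extend to $v$. The extension succeeds unless every color in $L(v)$ both appears on a neighbor of $v$ and is already used on $q$ vertices of $G - v$. When this obstruction holds, I would attempt a list-compatible swap: pick $c \in L(v)$ used on a neighbor $u$ and try to recolor $u$ with some $c' \in L(u)$ whose current class has fewer than $q$ vertices. If $c'$ already touches $u$, chase a Kempe chain in colors $c, c'$ restricted to vertices whose lists contain both; a potential-function argument tracking the number of ``obstructing'' colors in $L(v)$ would then drive the induction. The excluded graphs $K_m$ and $K_{2m+1,2m+1}$ should fall out as the only dense configurations in which no such swap is possible, with $K_{2m+1,2m+1}$ specifically blocked because its bipartite structure forces every Kempe chain to have even length.

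The main obstacle, and the reason this conjecture remains open, is that lists forbid the global color permutations that are essentially free in the non-list Hajn\'al--Szemer\'edi proof. When $k = \Delta(G)$ the equitability slack is roughly zero, so a Kempe chain cannot freely absorb or release vertices; it must end at a vertex whose list contains the precise needed color. The crux will therefore be a structural lemma showing that a minimum counterexample is ``locally like'' one of $K_m$, $C_{2m+1}$, $K_{2m+1,2m+1}$, followed by a direct check ruling these out under the hypothesis. Establishing this lemma --- in particular, verifying that list assignments create no genuinely new obstructions beyond the three non-list exceptions --- looks hard, and I would expect to first test the approach on bipartite graphs (where the explicit $K_{n,m}$ bounds proved later in this note provide anchor points) and on subcubic graphs before attempting the general case.
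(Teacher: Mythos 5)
There is a fundamental mismatch here: the statement you are addressing is a \emph{conjecture} (attributed to Kostochka, Pelsmajer, and West~\cite{KP03}), and the paper offers no proof of it --- it only records that the conjecture has been verified for particular classes of graphs (forests, complete bipartite graphs, outerplanar graphs, etc.). Your proposal, to its credit, is candid that the conjecture ``remains open,'' but that candor means what you have written is a research plan, not a proof, and it cannot be assessed as a correct argument for the statement.

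Beyond that, the plan has concrete gaps that would stop it at the first step. (1) The inductive step begins by locating a vertex $v$ of degree at most $k-1$; when $k=\Delta(G)$ and $G$ is $\Delta$-regular (e.g., the Petersen graph, $C_{2m}$, or any regular graph other than the three exceptions), no such vertex exists, so the induction does not even start in the hardest case. (2) The Kempe-chain step is not well defined in the list setting: a chain in colors $c,c'$ ``restricted to vertices whose lists contain both'' can terminate prematurely at a vertex whose list lacks $c'$, and swapping along such a truncated chain need not preserve properness at its boundary; this is precisely the known obstruction to transplanting the Hajn\'al--Szemer\'edi machinery, and your potential-function argument is only named, not constructed. (3) Applying the inductive hypothesis to $G-v$ requires $G-v$ to avoid the excluded graphs and requires reconciling the class cap $\lceil (N-1)/k\rceil$ for $G-v$ with the cap $\lceil N/k\rceil$ for $G$; when these coincide, adding $v$ to any existing class violates equitability, which is exactly the obstruction case you defer to the unproved ``structural lemma.'' That lemma is the entire content of the conjecture, so the proposal is circular at its core. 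If your goal is aligned with this note, the productive direction is the one the paper actually takes: prove the conjecture (and stronger statements with $k<\Delta$) for specific families such as $K_{n,m}$ by direct list-coloring constructions, as in Theorems~\ref{thm: main} and~\ref{thm: K2m}.
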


In~\cite{KP03} it is shown that Conjectures~\ref{conj: KPW1} and~\ref{conj: KPW2} hold for forests, complete bipartite graphs, connected interval graphs, and 2-degenerate graphs with maximum degree at least 5.  Conjectures~\ref{conj: KPW1} and~\ref{conj: KPW2} have also been verified for outerplanar graphs~\cite{ZB10}, series-parallel graphs~\cite{ZW11}, graphs with small maximum average degree~\cite{DZ18}, powers of cycles~\cite{KM18}, and certain planar graphs (see~\cite{DW18}, \cite{LB09}, \cite{ZB08}, and~\cite{ZB15}).  In 2013, Kierstead and Kostochka made substantial progress on Conjecture~\ref{conj: KPW1}, and proved it for all graphs of maximum degree at most 7 (see~\cite{KK13}).

Most of the research on equitable choosability has been focused on Conjectures~\ref{conj: KPW1} and~\ref{conj: KPW2}.  There is not much research that considers the equitable $k$-choosability of a graph $G$ when $k < \Delta(G)$.  In~\cite{KP03} it is shown that if $G$ is a forest and $k \geq 1 + \Delta(G)/2$, then $G$ is equitably $k$-choosable.  It is also shown that this bound is tight for forests.  Also, in~\cite{KM18}, it is conjectured that if $T$ is a total graph, then $T$ is equitably $k$-choosable for each $k \geq \max \{\chi_\ell(T), \Delta(T)/2 + 2 \}$ where $\chi_\ell(T)$ is the smallest $m$ such that $T$ is $m$-choosable.  In this note we will present some results on the equitable choosability of complete bipartite graphs that will give us equitable $k$-choosability for values of $k$ that are smaller than the maximum degree of the graph. 

Most results about equitable choosability state that some family of graphs is equitably $k$-choosable for all $k$ above some constant; even though, as with equitable coloring, if $G$ is equitably $k$-choosable, it need not be equitably $(k+1)$-choosable.  It is rare to have a result that determines whether a family of graphs is equitably $k$-choosable for each $k \in \N$.  In this note we have results of this form: we will completely determine when $K_{1,m}$ and $K_{2,m}$ are equitably $k$-choosable.  It is worth mentioning that a new list analogue of equitable coloring called proportional choosability was recently introduced in~\cite{KM182}, and a simple characterization of the proportional choosability of stars (i.e. graphs of the form $K_{1,m}$) has been found~\cite{KM182}.     

\subsection{An Open Question and Outline}

We now present a brief outline of our results.  The following open question motivated our research.

\begin{ques} \label{ques: main}
For what values of $k$ is the complete bipartite graph $K_{n,m}$ equitably $k$-choosable?
\end{ques}
   
Since Conjecture~\ref{conj: KPW2} is known to be true for complete bipartite graphs, we know that when $n \neq m$ or $n$ is even, $K_{n,m}$ is equitably $k$-choosable if $k \geq \max \{n,m\}$.  Our first two results give a partial answer to Question~\ref{ques: main}.  

\begin{thm} \label{thm: main}
$K_{n,m}$ is equitably $k$-choosable if
$$ m \leq \left\lceil\frac{m+n}{k} \right \rceil(k-n).$$
Consequently, for each $i=2, 3, \ldots, \left \lceil \sqrt{1 + m/n} \right \rceil $, $K_{n,m}$ is equitably $k$-choosable if
$$ \frac{m + in}{i} \leq k < \frac{m+n}{i-1}.$$
\end{thm}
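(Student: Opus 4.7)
My plan is to construct an equitable $L$-coloring in two greedy passes. Let $A, B$ denote the partite sets of $K_{n,m}$ with $|A| = n$ and $|B| = m$, let $L$ be any $k$-assignment, and set $N = \lceil (m+n)/k \rceil$. The case $m = 0$ is trivial, so I may assume $m \geq 1$; this together with $N \geq 1$ and the hypothesis $m \leq N(k - n)$ forces $k \geq n+1$.

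First I would greedy-color $A$ using $n$ distinct colors. Writing $A = \{a_1, \ldots, a_n\}$ and processing in order, at step $i$ the list $L(a_i)$ has $k$ colors and only $i - 1 \leq n - 1 < k$ must be avoided, so I can pick $c_i \in L(a_i) \setminus \{c_1, \ldots, c_{i-1}\}$. Set $C_A = \{c_1, \ldots, c_n\}$.

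Next I would greedy-color $B$ from the residual lists $L'(v) = L(v) \setminus C_A$, each of size at least $k - n$. Any proper extension must use $L'(v)$ for $v \in B$, since every $v \in B$ is adjacent to every vertex of $A$; and each color of $C_A$ is already at count $1 \leq N$, so all that remains is to ensure each color outside $C_A$ is used on at most $N$ vertices of $B$. Ordering $B = \{b_1, \ldots, b_m\}$ arbitrarily, at step $j$ I pick any color in $L'(b_j)$ that has so far been used fewer than $N$ times. The number of colors already ``full'' is at most $\lfloor (j - 1)/N \rfloor \leq \lfloor (m - 1)/N \rfloor$, which is strictly less than the integer $k - n$ because $N(k - n) \geq m > m - 1$. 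Hence a valid choice always exists in $L'(b_j)$, and this completes an equitable $L$-coloring.

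The ``Consequently'' clause is a short algebraic check: if $(m + in)/i \leq k < (m + n)/(i - 1)$, then $m \leq i(k - n)$ and $(m + n)/k > i - 1$, so $N = \lceil (m+n)/k \rceil \geq i$ and hence $N(k - n) \geq i(k - n) \geq m$, reducing to the first statement. The main point requiring care is the counting of saturated colors; committing $n$ distinct colors to $A$ (rather than trying to reuse colors there to free room for $B$) is a simple but perhaps counterintuitive choice that makes the argument for $B$ go through, and the greedy bound $\lfloor (j-1)/N \rfloor$ meshes exactly with the hypothesis $m \leq N(k - n)$. I do not anticipate any serious obstacle beyond these bookkeeping details.
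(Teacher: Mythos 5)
Your proposal is correct, and its overall structure coincides with the paper's: color the size-$n$ side with $n$ distinct colors (possible since $k>n$), then color the size-$m$ side from the residual lists $L'(v)=L(v)\setminus C_A$, each of size at least $k-n$, so that no color exceeds the multiplicity bound $N=\lceil (m+n)/k\rceil$; the algebra in the ``Consequently'' clause also matches. The only real difference is how the second phase is executed. The paper isolates it as a lemma about list-coloring $\overline{K_m}$ with lists of size $\geq \eta$ and multiplicity cap $\sigma$ when $m\leq\sigma\eta$, proved by a batch process (repeatedly find a color lying in at least $\sigma$ of the current lists, use it on exactly $\sigma$ vertices, delete it, and finish greedily once no such color exists); that lemma is then reused in the proofs of the $K_{2,m}$ results. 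You instead run a one-pass greedy and bound the number of saturated colors at step $j$ by $\lfloor (j-1)/N\rfloor\leq\lfloor (m-1)/N\rfloor<k-n\leq |L'(b_j)|$, which is a correct and arguably more elementary argument for the same fact. Your version is shorter and self-contained for this theorem; the paper's buys a reusable lemma needed elsewhere in the note.
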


\begin{thm} \label{thm: badk}
$K_{n,m}$ is not equitably $k$-choosable if
$$ m > \left\lceil\frac{m+n}{k} \right \rceil(k-1).$$
Consequently, for each $i=n+1, n+2, \ldots, n+m $, $K_{n,m}$ is not equitably $k$-choosable if
$$ \frac{m + n}{i} \leq k < \frac{m+i}{i}.$$
\end{thm}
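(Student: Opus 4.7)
The plan is to falsify equitable $k$-choosability by producing a single bad $k$-assignment. The natural candidate is the constant assignment $L(v) = \{1, 2, \ldots, k\}$ for every $v \in V(K_{n,m})$; under this $L$, an equitable $L$-coloring is exactly an equitable $k$-coloring in the classical sense. Thus the theorem reduces to showing that the hypothesis $m > \lceil (m+n)/k \rceil (k-1)$ forces $K_{n,m}$ to fail to be equitably $k$-colorable.

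To establish this reduction, let $A$ and $B$ be the partite sets of $K_{n,m}$ with $|A| = n$ and $|B| = m$, and set $C = \lceil (m+n)/k \rceil$. Assume for contradiction that a proper $k$-coloring $f$ of $K_{n,m}$ exists whose color classes all have size at most $C$. Since every vertex of $A$ is adjacent to every vertex of $B$, the palettes are disjoint: $f(A) \cap f(B) = \emptyset$. Because $n \geq 1$ at least one color is used on $A$, hence $|f(B)| \leq k - 1$. On the other hand every vertex of $B$ is colored from $f(B)$ with each color appearing at most $C$ times, so $|f(B)| \cdot C \geq m$. Combining these bounds yields $m \leq C(k-1)$, contradicting the hypothesis.

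The consequence is then a bookkeeping step. Fix $i \in \{n+1, \ldots, n+m\}$ and a positive integer $k$ satisfying $(m+n)/i \leq k < (m+i)/i$. The lower bound on $k$ gives $(m+n)/k \leq i$, and since $i$ is an integer this yields $\lceil (m+n)/k \rceil \leq i$. The upper bound on $k$ rearranges to $i(k-1) < m$. Multiplying, $\lceil (m+n)/k \rceil (k-1) \leq i(k-1) < m$, so the hypothesis of the main statement is satisfied and the preceding paragraph applies.

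I do not anticipate a substantial obstacle: the heart of the argument is the elementary observation that in a complete bipartite graph the two sides cannot share colors, so at most $k-1$ colors remain to cover the larger side in classes of size at most $C$. The only mild delicacy is confirming $\lceil (m+n)/k \rceil \leq i$ from $k \geq (m+n)/i$, which uses only that $i$ is itself a positive integer.
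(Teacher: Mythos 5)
Your proposal is correct and follows essentially the same route as the paper: both use the constant $k$-assignment $L(v)=\{1,2,\ldots,k\}$, observe that the two partite sets receive disjoint color sets so at most $k-1$ colors are available for the side of size $m$, and derive the contradiction $m \leq \lceil (m+n)/k\rceil(k-1)$ from the bound on color-class sizes. Your derivation of the ``consequently'' clause is also sound (and is more explicit than the paper, which omits it).
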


Note Theorem~\ref{thm: main} is only interesting if $k<m+n$ (i.e. if $\lceil (m+n)/k \rceil \geq 2$).  So, the inequality in Theorem~\ref{thm: main} is easier to satisfy if $ m=\max\{m,n\}$.  Since $K_{m,n}=K_{n,m}$, it is more helpful to apply Theorem~\ref{thm: main} when $m \geq n$.  Also, Theorems~\ref{thm: main} and~\ref{thm: badk} do not address all possible values of $k$ when $n \geq 2$.  However, Theorems~\ref{thm: main} and~\ref{thm: badk} do address all possible $k$ values when $n=1$.  In particular, we have the following corollary.

\begin{cor} \label{cor: star}
$K_{1,m}$ is equitably $k$-choosable if and only if 
$$ m \leq \left\lceil\frac{m+1}{k} \right \rceil(k-1).$$
\end{cor}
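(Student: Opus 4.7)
The plan is to obtain Corollary~\ref{cor: star} directly from Theorems~\ref{thm: main} and~\ref{thm: badk} by specializing each to $n=1$. The structural observation that makes the characterization clean is that the coefficient $(k-n)$ appearing in Theorem~\ref{thm: main} and the coefficient $(k-1)$ appearing in Theorem~\ref{thm: badk} coincide when $n=1$. Consequently, the sufficient condition from the former and the insufficient condition from the latter describe exactly complementary regions of the $k$-axis, so together they produce a biconditional rather than leaving a gap as they do for $n \geq 2$.

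For the ``if'' direction, I simply apply Theorem~\ref{thm: main} with $n=1$, which asserts that $K_{1,m}$ is equitably $k$-choosable whenever $m \leq \lceil (m+1)/k\rceil(k-1)$. For the ``only if'' direction, I apply Theorem~\ref{thm: badk} with $n=1$, whose hypothesis becomes $m > \lceil (m+1)/k\rceil(k-1)$ and whose conclusion is that $K_{1,m}$ is not equitably $k$-choosable. Since these two inequalities are exact logical negations of each other, every positive integer pair $(k,m)$ falls into precisely one case, and the stated biconditional follows.

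There is no substantive obstacle at this point, as all the real work is carried by Theorems~\ref{thm: main} and~\ref{thm: badk}; the corollary is essentially a bookkeeping remark. As a brief sanity check, one may verify the degenerate regimes: when $k=1$ the right-hand side is $0$, so Theorem~\ref{thm: badk} correctly reports that $K_{1,m}$ is never equitably $1$-choosable (consistent with the existence of an edge), and when $k \geq m+1$ the ceiling equals $1$ so the inequality reduces to $m \leq k-1$, which is automatic and recovers the equitable $k$-choosability of $K_{1,m}$ already implied by Conjecture~\ref{conj: KPW1} for forests.
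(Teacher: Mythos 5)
Your proposal is correct and matches the paper exactly: the authors also obtain Corollary~\ref{cor: star} as an immediate consequence of Theorems~\ref{thm: main} and~\ref{thm: badk}, which at $n=1$ give complementary conditions since $k-n = k-1$. Nothing is missing.
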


Corollary~\ref{cor: star} answers Question~\ref{ques: main} for stars (i.e. the $n=1$ case).  Notice that the result in~\cite{KP03} for forests only implies that $K_{1,m}$ is equitably $k$-choosable whenever $k \geq 1 + m/2$.  Using further ideas, we also prove that Theorem~\ref{thm: badk} gives the best possible result for $K_{2,m}$ and hence answer Question~\ref{ques: main} in the case that $n=2$.  In particular, we prove the following result.

\begin{thm} \label{thm: K2m}
$K_{2,m}$ is equitably $k$-choosable if and only if 
$$ m \leq \left \lceil \frac {m+2}{k}\right \rceil (k-1).$$
\end{thm}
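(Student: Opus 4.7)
The necessity direction is immediate from Theorem~\ref{thm: badk} with $n=2$: the contrapositive states that if $m>\ceil{(m+2)/k}(k-1)$, then $K_{2,m}$ is not equitably $k$-choosable. The substance of the theorem is the sufficiency direction.

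Write $\{a_1,a_2\}$ and $\{b_1,\ldots,b_m\}$ for the partite sets, set $B=\ceil{(m+2)/k}$, fix a $k$-assignment $L$, and assume $m\leq B(k-1)$. I will first choose $f(a_1),f(a_2)$ and then extend to the independent set $b_1,\ldots,b_m$. Once $f(a_1),f(a_2)$ are set, the completion is an assignment problem with color capacities: color each $b_i$ from $L(b_i)\setminus\{f(a_1),f(a_2)\}$ using each color at most $B$ times. By a standard capacitated version of Hall's theorem, this extension exists provided
\[ \bigl|\{i : L(b_i)\subseteq T\cup\{f(a_1),f(a_2)\}\}\bigr|\leq B|T| \]
for every color set $T$ disjoint from $\{f(a_1),f(a_2)\}$. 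Since $|L(b_i)|=k$, the inequality is automatic once $|T|\geq k-1$, so only small $T$ matter.

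The trivial boundary case is $B=1$, i.e.\ $k\geq m+2=|V(K_{2,m})|$. Then every list has size at least $|V(K_{2,m})|$, so Hall's theorem yields a system of distinct representatives and the rainbow proper coloring it produces is equitable. Assume $B\geq 2$ from now on, and split on whether $L(a_1)\cap L(a_2)$ is empty. If it is not, pick $c$ in the intersection and set $f(a_1)=f(a_2)=c$; then the only nontrivial Hall constraint is at $|T|=k-1$, and it holds since $m\leq B(k-1)$ (the assumption $B\geq 2$ is what lets $c$ be reused on both $a_1$ and $a_2$). If $L(a_1)\cap L(a_2)=\emptyset$, pick distinct $c_1\in L(a_1),c_2\in L(a_2)$; the Hall constraint at $|T|=k-2$ then reads: every $k$-set $S\supseteq\{c_1,c_2\}$ must satisfy $|\{i:L(b_i)=S\}|\leq B(k-2)$. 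Call $S$ \emph{bad} if this fails.

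The key counting step is that for $k\geq 3$ there is at most one bad $k$-set: multiplicities of bad lists sum to at most $m\leq B(k-1)$, each is at least $B(k-2)+1$, and $B(k-1)<2\bigl(B(k-2)+1\bigr)$ whenever $k\geq 3$. A single bad $S$ is always dodgeable by disjointness: either $L(a_1)\not\subseteq S$, whence $c_1\in L(a_1)\setminus S$ works, or $L(a_1)=S$ (both sets have size $k$), in which case $L(a_2)\cap S=L(a_2)\cap L(a_1)=\emptyset$ and any $c_2\in L(a_2)$ lies outside $S$. The remaining case $k=2$ is handled by a direct count: the hypothesis forces $m\leq 3$, there are $|L(a_1)|\cdot|L(a_2)|=4$ candidate pairs $\{c_1,c_2\}$, and at most $3$ of them can equal some $L(b_i)$, so picking a pair that is not a list finishes. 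I expect the main obstacle to be this disjoint-lists subcase: the whole plan rests on the inequality $B(k-1)<2(B(k-2)+1)$ cutting down the number of bad lists to at most one, together with the disjointness of $L(a_1),L(a_2)$, which prevents both of them from being contained in the same bad $S$.
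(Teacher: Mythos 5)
Your proof is correct, but it reaches the disjoint-lists case --- the heart of the theorem --- by a genuinely different route than the paper. Both arguments agree on the easy parts: necessity via Theorem~\ref{thm: badk}, the trivial ranges of $k$, the split on whether $L(u_1)\cap L(u_2)$ is empty, and the observation that a common color on $u_1,u_2$ leaves every list on $A$ with at least $k-1$ usable colors (the paper extends via its greedy Lemma~\ref{lem: construct}, which is precisely the special case of your capacitated Hall's theorem in which the condition is automatic). For disjoint $L(u_1),L(u_2)$ the paper proceeds quantitatively: an averaging argument (Lemma~\ref{lem: badlists}, using $ab\le\lceil k/2\rceil\lfloor k/2\rfloor$) produces a pair $z_1,z_2$ lying together in at most $m/4$ of the lists on $A$, and then a fairly delicate multi-stage greedy process colors the deficient lists first, tracking the nested sets $B_t$ of lists that have lost the maximum number of colors. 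You instead let capacitated Hall do all the bookkeeping: the hypothesis $m\le B(k-1)$ disposes of every constraint except those at $|T|=k-2$, which correspond exactly to $k$-sets occurring as more than $B(k-2)$ of the lists $L(v)$; a two-line count shows at most one such over-represented $k$-set can exist when $k\ge 3$; and disjointness of $L(u_1)$ and $L(u_2)$ guarantees you can pick $c_1,c_2$ not both inside it. Your version is shorter, isolates the exact obstruction (a single popular list rather than an $m/4$ density bound), and avoids the paper's inductive process entirely, at the cost of invoking the capacitated form of Hall's theorem as a black box (standard, and provable by splitting each color into $B$ copies) where the paper is self-contained. Your handling of the boundary cases $B=1$ and $k=2$ is also correct.
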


We now quickly present some illustrative examples of Corollary~\ref{cor: star} and Theorem~\ref{thm: K2m}.  For example, note Corollary~\ref{cor: star} implies that $K_{1,25}$ is equitably $k$-choosable if and only if $ k \in \{6, 8, 10, 11, 12 \} \cup \{z \in \N : z \geq 14 \}.$  Notice that the result in~\cite{KP03} for forests only implies that $K_{1,25}$ is equitably $k$-choosable whenever $k \geq 14$.  Similarly, note that Theorem~\ref{thm: K2m} implies that $K_{2,139}$ is equitably $k$-choosable if and only if $k \in \{14, 15, 17, 19, 20, 21, 22, 23\} \cup \{z \in \N: z \geq 25\}.$  

With Corollary~\ref{cor: star} and Theorem~\ref{thm: K2m} in mind, one might conjecture that $K_{3,m}$ is equitably $k$-choosable if and only if $ m \leq \left \lceil \frac {m+3}{k}\right \rceil (k-1)$ (i.e. Theorem~\ref{thm: badk} gives the best possible result in the case that $n=3$).  However, this is not true.  Indeed, $3 \leq \lceil (3+3)/2 \rceil (2-1)$ and $4 \leq \lceil (3+4)/2 \rceil (2-1)$, yet it easy to see that both $K_{3,3}$ and $K_{3,4}$ are not equitably 2-choosable since such graphs are not even 2-choosable (see~\cite{ET79} and~\cite{V76}).

\section{Proofs of Results} \label{results}

We begin with a useful lemma.

\begin{lem} \label{lem: construct} 
Suppose that $G = \overline{K_m}$ \footnote{$\overline{K_m}$ denotes the complement of a complete graph on $m$ vertices.  So, $G$ consists of $m$ isolated vertices} and $L^{(1)}$ is a list assignment for $G$ such that $|L^{(1)}(v)| \geq \eta$ for each $v \in V(G)$. If $\sigma \in \mathbb{N}$ is such that $m \leq \sigma \eta$, then there is a proper $L^{(1)}$-coloring of $G$ that uses no color more than $\sigma$ times.
\end{lem}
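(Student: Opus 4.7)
The plan is to exploit the fact that $G=\overline{K_m}$ has no edges, so any assignment of colors from the lists is automatically a proper coloring; the only real constraint is the color-usage bound. Thus the lemma reduces to the following combinatorial statement: given $m$ lists each of size at least $\eta$, we can pick one element from each list so that no element is picked more than $\sigma$ times, provided $m\leq\sigma\eta$.

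The cleanest approach is a greedy one. Order the vertices arbitrarily as $v_1,v_2,\ldots,v_m$ and color them one at a time, at each step choosing for $v_i$ any color in $L^{(1)}(v_i)$ that has so far been used strictly fewer than $\sigma$ times. I claim such a choice always exists. Suppose, for contradiction, that at step $i$ every color in $L^{(1)}(v_i)$ has already been used $\sigma$ times. Since $|L^{(1)}(v_i)|\geq\eta$, this forces at least $\sigma\eta$ of the first $i-1$ color assignments to have been made. But
\[
\sigma\eta \;\leq\; i-1 \;\leq\; m-1 \;<\; m \;\leq\; \sigma\eta,
\]
a contradiction. Hence the greedy procedure succeeds, and the resulting coloring is the desired proper $L^{(1)}$-coloring in which no color appears more than $\sigma$ times.

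The only mildly subtle point, and what I expect to be the main (if modest) obstacle, is the careful bookkeeping in the counting step: making sure one compares the number of \emph{previous} color assignments ($i-1$) against $\sigma\eta$, and using $m\leq\sigma\eta$ to obtain the strict inequality $i-1<\sigma\eta$. Everything else is immediate from the lack of edges in $\overline{K_m}$.

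As a sanity check, an alternative proof via Hall's theorem gives the same conclusion: build an auxiliary bipartite graph with $V(G)$ on one side and, on the other side, $\sigma$ copies of each color appearing in some $L^{(1)}(v)$, joining $v$ to every copy of each $c\in L^{(1)}(v)$; for any nonempty $S\subseteq V(G)$, the color set $C_S=\bigcup_{v\in S}L^{(1)}(v)$ has size at least $\eta$, so the neighborhood of $S$ has size at least $\sigma\eta\geq m\geq|S|$, verifying Hall's condition. Either route yields the lemma.
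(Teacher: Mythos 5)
Your proof is correct, and it takes a genuinely different (and cleaner) route from the paper's. The paper argues color-by-color: it repeatedly finds a color lying in at least $\sigma$ of the current lists, assigns it to exactly $\sigma$ uncolored vertices, deletes it from all remaining lists, and then observes that either this process stalls within $\eta$ rounds (after which a greedy finish uses each surviving color fewer than $\sigma$ times) or it colors $\sigma\eta \geq m$ vertices, i.e.\ all of them. You instead color vertex-by-vertex under the single rule ``use any color of $L^{(1)}(v_i)$ used fewer than $\sigma$ times so far,'' and rule out getting stuck by the chain $\sigma\eta \leq i-1 \leq m-1 < m \leq \sigma\eta$. Your counting step is sound: at a stuck vertex $v_i$, the at least $\eta$ distinct colors of $L^{(1)}(v_i)$ have each absorbed at least $\sigma$ of the $i-1$ prior assignments, and those assignments are distinct. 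What your approach buys is economy --- one uniform greedy rule, one inequality, no case split on early termination, and no need to verify that a deleted color cannot reappear; what the paper's buys is essentially nothing extra here, though its ``remove a popular color and recurse'' template is reused almost verbatim in the proof of Theorem~\ref{thm: K2m}, which is presumably why it is phrased that way. The Hall's-theorem argument you sketch as a sanity check is also valid and is the natural global version of the same bound.
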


\begin{proof}
We begin by describing an inductive process for coloring $G$. If there is no color in at least $\sigma$ of the lists associated with $L^{(1)}$ the process stops. Otherwise, there is a color, $c_1$, in at least $\sigma$ of the lists associated with $L^{(1)}$, and we arbitrarily color $\sigma$ of the vertices that have $c_1$ in their list with $c_1$. Call the set of all vertices colored with $c_1$, $A_1$. 

Now, we inductively continue in this fashion. In particular, for $t \geq 2$, let $L^{(t)}(v) = L^{(t-1)}(v) - \{c_{t-1}\}$ for each $v \in V(G) - \bigcup_{i=1}^{t-1} A_i$. Note that $|L^{(t)}(v)| \geq \eta -(t-1)$ for each $v \in V(G)- \bigcup_{i=1}^{t-1} A_i$. If there is no color in at least $\sigma$ of the lists associated with $L^{(t)}$ the process stops. Otherwise, there is a color, $c_t$, in at least $\sigma$ of the lists associated with $L^{(t)}$, and we arbitrarily color $\sigma$ of the uncolored vertices that have $c_t$ in their list with $c_t$. Then, call the set of all vertices colored with $c_t$, $A_t$. 

Now, if the process stops at some $t \leq \eta$, we can complete a proper $L^{(1)}$-coloring of $G$ with the property that no color is used more than $\sigma$ times by greedily coloring each $v \in V(G)- \bigcup_{i=1}^{t-1} A_i$ with a color in $L^{(t)}(v)$; this is possible since $|L^{(t)}(v)| \geq 1$ whenever $t \leq \eta$. Otherwise, we get to $t = \eta$ and then $\sigma$ vertices in $ V(G) - \bigcup_{i=1}^{\eta -1} A_i$ are colored with $c_{\eta}$. After coloring vertices with $c_{\eta}$, $\sigma \eta$ of the vertices in $V(G)$ are colored. Since $|V(G)|=m \leq \sigma \eta$, we must have colored all the vertices in $V(G)$ which means we have obtained a proper $L^{(1)}$-coloring of $G$ such that no color is used more than $\sigma$ times. 
\end{proof}

We are now ready to prove Theorem~\ref{thm: main}.

\begin{proof}
Assume $m,n,k \in \N$ satisfy $m \leq \lceil (m+n)/k \rceil (k-n)$. Let $G$ be a copy of $K_{n,m}$ with partite sets $\{u_{1}, u_{2}, \dots, u_{n}\}$ and $A=\{v_{1}, v_{2}, \dots, v_{m}\}$. Let $L$ be a $k$-assignment for $G$. We will show that $G$ is equitably $L$-colorable.  

We must have $k > n$.  So, there exists a $c_i \in L(u_i)$ for each $i \in \{1,2,\ldots,n\}$ such that $c_1,c_2,\ldots,c_n$ are pairwise distinct. We color $u_i$ with $c_i$ for each $i \in \{1,2,\ldots,n\}$. Now, for each $v \in A$, let $L'(v) = L(v)-\{c_1,c_2,\dots,c_n\}$.  Clearly $|L(v)| \geq k-n$. Lemma~\ref{lem: construct} implies that we can find a proper $L'$-coloring of $G[A]$ such that no color is used more than $\lceil (m+n)/k \rceil$ times. Such a coloring completes an equitable $L$-coloring of $G$.
\end{proof}

Now, we prove Theorem~\ref{thm: badk}

\begin{proof}

The result is obvious when $k=1$.  So, assume $k \geq 2$ and $n,m \in \N$ satisfy $m > \lceil (m+n)/k \rceil (k-1)$. Let $G$ be a copy of $K_{n,m}$ with partite sets $A'=\{u_{1}, u_{2}, \ldots, u_n\}$ and $A=\{v_{1}, v_{2}, \dots, v_{m}\}$.  Next, let $L$ be the $k$-assignment for $G$ that assigns $\{1, 2, \dots, k\}$ to every vertex in $V(G)$.  It suffices to show $G$ is not equitably $L$-colorable.  For the sake of contradiction, assume $f$ is an equitable $L$-coloring of $G$. Suppose $|f(A')|=a$.  Clearly $1 \leq a < k$, and without loss of generality, we may assume $f(A')= \{1, 2, \ldots, a \}$. Since $f$ is a proper coloring, this means $f(A) \subseteq \{a+1, \dots, k\}$ and $|f^{-1}(\{a+1, \dots,k\})|= m$. Also, since $f$ is an equitable $L$-coloring, $| f^{-1}(j)|  \leq \lceil |V(G)|/k \rceil = \lceil (m+n)/k \rceil$ when $j \in \{a+1, \dots, k\}$. Thus, 
$$m= \sum^{k}_{i=a+1} | f^{-1}(i) | \leq \sum^{k}_{j=a+1} \left \lceil \frac{m+n}{k} \right \rceil = (k-a)\left \lceil \frac{m+n}{k} \right \rceil \leq (k-1)\left \lceil \frac{m+n}{k} \right \rceil$$ 
which is a contradiction. 
\end{proof}

It is clear that Corollary~\ref{cor: star} follows immediately from Theorems~\ref{thm: main} and~\ref{thm: badk}.  

We now turn our attention to proving Theorem~\ref{thm: K2m}.  We begin by proving a lemma that will allow us to restrict our attention to the case where the lists corresponding to the vertices in the partite set of size two are disjoint.

\begin{lem} \label{lem: easycase}
Suppose $G =K_{2,m}$ and the partite sets of $G$ are $A' = \{u_1,u_2\}$ and $A = \{v_1,v_2,\ldots, v_m\}$. Also, suppose that $L$ is a $k$-assignment for $G$ such that $L(u_1) \cap L(u_2) \neq \emptyset$. If $m \leq \lceil (m+2)/k \rceil (k-1)$ and $k <m+2$, then $G$ is equitably $L$-colorable.
\end{lem}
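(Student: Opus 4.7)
The plan is to exploit the hypothesis $L(u_1) \cap L(u_2) \neq \emptyset$: I will color both $u_1$ and $u_2$ with a single common color $c$, remove $c$ from the list of every vertex in $A$, and then apply Lemma~\ref{lem: construct} to color $A$ without overusing any remaining color.

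Concretely, choose any $c \in L(u_1) \cap L(u_2)$ and color both $u_1$ and $u_2$ with $c$. Define $L'(v) = L(v) - \{c\}$ for each $v \in A$, so $|L'(v)| \geq k - 1$. Set $\eta = k-1$ and $\sigma = \lceil (m+2)/k \rceil$. The hypothesis $m \leq \lceil (m+2)/k \rceil(k-1)$ is exactly $m \leq \sigma \eta$, so Lemma~\ref{lem: construct} applied to $G[A] \cong \overline{K_m}$ with list assignment $L'$ produces a proper $L'$-coloring of $G[A]$ in which no color appears more than $\sigma$ times. (The case $k = 1$ is vacuous since the numerical hypothesis then forces $m = 0$; so I may assume $k \geq 2$ and hence $\eta \geq 1$, making Lemma~\ref{lem: construct} non-vacuous.) Together with the coloring of $u_1$ and $u_2$, this yields a proper $L$-coloring $f$ of $G$.

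It remains to verify $f$ is equitable. Colors other than $c$ appear only on vertices of $A$, and there at most $\sigma$ times by construction. Since $c$ was removed from every $L'(v)$, the color $c$ is used in $f$ exactly twice, namely on $u_1$ and $u_2$, so it suffices that $\sigma \geq 2$. This is the only place the second hypothesis enters: $\lceil (m+2)/k \rceil \geq 2$ is equivalent to $m+2 > k$, i.e., $k < m+2$.

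The main obstacle is precisely this last bookkeeping step. The inequality $m \leq \sigma(k-1)$ alone does not suffice, because pairing $u_1$ and $u_2$ on the common color $c$ uses $c$ twice; the hypothesis $k < m+2$ is exactly what guarantees that $\sigma \geq 2$ and hence that this doubling is allowed under the equitable cap.
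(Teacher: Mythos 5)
Your proof is correct and follows essentially the same route as the paper's: color both of $u_1,u_2$ with a common color, delete it from the lists on $A$, and invoke Lemma~\ref{lem: construct} with $\eta = k-1$ and $\sigma = \lceil (m+2)/k\rceil$. Your explicit check that $k < m+2$ guarantees $\lceil (m+2)/k\rceil \geq 2$, so the doubled use of the common color is permitted, is a detail the paper leaves implicit; otherwise the arguments coincide.
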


\begin{proof}
Suppose $z_1 \in L(u_1) \cap L(u_2)$. Also, suppose that $m \leq \lceil (m+2)/k \rceil(k-1)$ and $k < m+2$.  We color $u_1$ and $u_2$ with $z_1$. For each $v \in A$ we let $L'(v) = L(v) - \{z_1\}$ which implies $|L'(v)| \geq k-1$. Lemma~\ref{lem: construct} implies that we can find a proper $L'$-coloring of $G[A]$ such that no color is used more than $ \lceil (m+2)/k \rceil$ times. Such a coloring completes an equitable $L$-coloring of $G$.
\end{proof}

We are now ready to focus on the case where the lists corresponding to the vertices in the partite set of size two are disjoint.  The next lemma shows that in this case there exists a way to color the vertices in the partite set of size two such that both used colors do not appear in more than one fourth of the lists corresponding to the vertices in the other partite set.

\begin {lem} \label{lem: badlists}
Suppose $G = K_{2,m}$ and the partite sets of $G$ are $A' = \{u_1, u_2\}$ and $A = \{v_1, v_2,\cdots , v_m\}$. Also, suppose that $L$ is a $k$-assignment for $G$ such that $L(u_1) \cap L(u_2) = \emptyset$. There must exist a $c_q \in L(u_1)$ and $c_r \in L(u_2)$ such that $|\{v\in A:\{ c_q,c_r\} \subseteq L(v)\}| \leq m/4$.
\end{lem}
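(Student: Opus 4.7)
The plan is to use a straightforward double-counting and averaging argument. For each $v \in A$, I would set $a_v = |L(v) \cap L(u_1)|$ and $b_v = |L(v) \cap L(u_2)|$. Since $L(u_1)$ and $L(u_2)$ are disjoint, the sets $L(v) \cap L(u_1)$ and $L(v) \cap L(u_2)$ are disjoint subsets of $L(v)$, so $a_v + b_v \leq |L(v)| = k$. By the AM-GM inequality, $a_v b_v \leq \left(\frac{a_v+b_v}{2}\right)^2 \leq \frac{k^2}{4}$.

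Next I would double-count the quantity
\[
S = \sum_{(c_q, c_r) \in L(u_1) \times L(u_2)} \bigl|\{v \in A : \{c_q, c_r\} \subseteq L(v)\}\bigr|.
\]
Switching the order of summation, for each $v \in A$ the pair $(c_q,c_r)$ satisfies $\{c_q,c_r\} \subseteq L(v)$ iff $c_q \in L(v) \cap L(u_1)$ and $c_r \in L(v) \cap L(u_2)$, so the inner count is exactly $a_v b_v$. Hence
\[
S = \sum_{v \in A} a_v b_v \leq m \cdot \frac{k^2}{4}.
\]

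Since the sum ranges over $|L(u_1)||L(u_2)| = k^2$ ordered pairs, the average value of $\bigl|\{v \in A : \{c_q,c_r\} \subseteq L(v)\}\bigr|$ is at most $\frac{m k^2/4}{k^2} = \frac{m}{4}$. Therefore some pair $(c_q, c_r) \in L(u_1) \times L(u_2)$ attains a value at most $m/4$, which is the desired pair.

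I do not foresee a real obstacle here; the main thing to be careful about is the use of disjointness of $L(u_1)$ and $L(u_2)$, which is exactly what guarantees $a_v + b_v \leq k$ and hence the AM-GM bound $a_v b_v \leq k^2/4$. Without disjointness the argument would fail, since colors in $L(u_1) \cap L(u_2)$ could be double-counted inside $L(v)$.
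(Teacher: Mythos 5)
Your proof is correct and follows essentially the same double-counting and averaging argument as the paper: both bound $\sum_v a_v b_v$ by $mk^2/4$ using disjointness of $L(u_1)$ and $L(u_2)$ and then conclude that some of the $k^2$ pairs must achieve at most the average $m/4$. The paper phrases the final step as a proof by contradiction and uses the slightly sharper intermediate bound $\lceil k/2\rceil\lfloor k/2\rfloor$, but these are cosmetic differences.
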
 

\begin{proof}
Suppose $L(u_1) = \{c_1, \ldots , c_k\}$ and $L(u_2)= \{c_{k+1}, \ldots ,c_{2k}\}$. Let $P = L(u_1) \times L(u_2)$. Then, for each $(c_i,c_j) \in P$ we say $(c_i,c_j)$ is \emph{contained in} $L(v)$ if $\{c_i,c_j\} \subseteq L(v)$. For each $(c_i,c_j) \in P$, we let $\beta_{i,j} = |\{v\in A : (c_i,c_j) \text{ is contained in } L(v)\}|$.  

For each $v \in A$, we let $\gamma_v = |\{(c_i,c_j) \in P : (c_i,c_j) \text { is contained in } L(v)\}|$.  From these definitions, it is easy to see that $\sum _{(c_i,c_j) \in P} \beta _{i,j} = \sum _{i=1}^{m} \gamma _{v_i}$.  If $v \in A$, $a = |L(u_1) \cap L(v)|$, and $b = |L(u_2) \cap L(v)|$, then the number of elements of $P$ contained in $L(v)$ is $ab$ and $a+b \leq k$. So, $ab \leq \lceil k/2 \rceil \lfloor k/2 \rfloor$.  This means $\gamma _{v} \leq \lceil k/2 \rceil \lfloor k/2 \rfloor$ for each $v \in A$, and we have the following: 
$$\sum _{(c_i,c_j) \in P} \beta _{i,j} = \sum _{i=1}^{m} \gamma _{v_i} \leq \left \lceil \frac{k}{2} \right \rceil \left \lfloor \frac {k}{2} \right \rfloor m \leq \frac{k^2m}{4}.$$
  
For the sake of contradiction, suppose that there does not exist a $(c_i,c_j) \in P$ such that $\beta _{i,j} \leq m/4$. This means that 
$$\sum _{(c_i,c_j) \in P} \beta _{i,j} > \frac{k^2m}{4}$$
which is a contradiction.
\end{proof}

We are now ready to prove Theorem~\ref{thm: K2m}.

\begin{proof}
Note that the only if direction is implied by Theorem~\ref{thm: badk}. So, we prove the if direction. The result is obvious when $k \geq m+2$ and when $k=1$. Notice that when $k=2$ the only values that satisfy the inequality are $m=1,2,3$ and it is easy to verify that $K_{2,1},K_{2,2},K_{2,3}$ are equitably 2-choosable. So, we may assume that $3 \leq k < m+2$.

Suppose $G = K_{2,m}$ and the partite sets of $G$ are $A' = \{u_1, u_2\}$ and $A = \{v_1, v_2,\cdots , v_m\}$. Also, suppose that $L$ is an arbitrary $k$-assignment for $G$ such that $m \leq \lceil (m+2)/k \rceil (k-1) $. Showing that an equitable $L$-coloring of $G$ exists will complete the proof. By Lemma~\ref{lem: easycase} we may assume that $L(u_1) \cap L(u_2) = \emptyset$. By Lemma~\ref{lem: badlists} we know there exists a $z_1 \in L(u_1)$ and $z_2\in L(u_2)$ such that $|\{v\in A:\{ z_1,z_2\} \subseteq L(v)\}| \leq m/4$. Color $u_1$ with $z_1$ and $u_2$ with $z_2$. Let $L^{(1)}(v) = L(v) - \{z_1,z_2\}$ for each $v \in A$. We will now construct a proper $L^{(1)}$-coloring of $G[A]$ that uses no color more than $ \lceil (m+2)/k \rceil$ times. Such a coloring will complete an equitable $L$-coloring of $G$. 

Note that $|L^{(1)}(v)| \geq k-2$ for each $v \in A$. Let $B_1 = \{ v\in A : |L^{(1)}(v)| = k-2\}$. By our choice of $z_1$ and $z_2$, $|B_1| \leq m/4$. We color a subset of the vertices in $B_1$ by the following inductive process: If there is no color appearing in at least $ \lceil (m+2)/k \rceil $ of the lists assigned by $L^{(1)}$ to vertices in $B_1$, then the process stops. Otherwise, there is a color, $c_1$, in at least $\lceil (m+2)/k \rceil $ of the lists assigned by $L^{(1)}$ to vertices in $B_1$, and we arbitrarily color $\lceil (m+2)/k \rceil$ of the vertices in $B_1$ that have $c_1$ in their list with $c_1$. Let $A_1$ be the set of vertices colored with $c_1$.

Now for $t \geq 2$ let $L^{(t)}(v) = L^{(t-1)}(v) - \{c_{t-1}\}$ for each $v \in A - \bigcup^{t-1}_{i=1} A_i$. Clearly, for each $v \in A - \bigcup^{t-1}_{i=1} A_i$, $|L^{(t)}(v)| \geq k-2-(t-1)$. Let $ B_t = \{ v \in A - \bigcup^{t-1}_{i=1} A_i: |L^{(t)}(v)| = k-2-(t-1)\}$. Suppose $w \in B_t$. Note that $|L^{(t-1)}(w)| \geq k-2-(t-2)$, and $L^{(t)} (w) = L^{(t-1)}(w) - \{c_t\}$. Since $|L^{(t)}(w)| = k-2-(t-1)$, $c_t \in L^{(t-1)}(w)$ and $|L^{(t-1)}(w)| = k-2-(t-2)$. This means $w\in B_{t-1}$ and $B_t \subseteq B_{t-1}$. If there are no colors in $\lceil (m+2)/k \rceil$ of the lists assigned by $L^{(t)}$ to vertices in $B_t$, then the process stops. Otherwise, there is a color $c_t$ in $\lceil (m+2)/k \rceil$ of those lists and we arbitrarily color $\lceil (m+2)/k \rceil$ vertices in $B_t$ that have $c_t$ in their list with $c_t$. Then, we let $A_t$ be the set of vertices colored with $c_t$. 

Suppose $t$ gets to $k-2$ but the process does not stop. This implies there is a color $c_{k-2}$ in at least $\lceil (m+2)/k \rceil$ of the lists assigned by $L^{(k-2)}$ to vertices in $B_{k-2}$. We have $B_{k-2} \subseteq B_1$ and $\bigcup ^{k-3} _{i=1} A_i \subseteq B_1$. Thus $|B_1| \geq (k-2)\lceil (m+2)/k \rceil$. Since $|B_1| \leq m/4$, $(k-2)\lceil (m+2)/k \rceil \leq m/4$. Since $k\geq 3$, $(k-2) > \frac{1}{4} (k-1)$. This implies 
$$ (k-2)\left \lceil \frac {m+2}{k}\right \rceil > \frac{1}{4} (k-1)\left \lceil \frac {m+2}{k}\right \rceil \geq \frac{m}{4}$$ which is a contradiction. So, the process must stop at some $t \leq k-2$.

Suppose the process stops at $t=\alpha$.  This means $\alpha \leq k-2$. Suppose there is no color in at least $\lceil (m+2)/k \rceil$ of the lists assigned by $L^{(\alpha)}$ to the vertices in $ A - \bigcup^{\alpha -1}_{i=1} A_i$. In this case we can complete an equitable $L$-coloring of $G$ by greedily coloring each $v \in A - \bigcup^{\alpha -1}_{i=1} A_i$ with a color in $L^{(\alpha)}(v)$. This is possible since for each $v \in A - \bigcup^{\alpha -1}_{i=1} A_i$,  $|L^{(\alpha)}(v)| \geq k-2-(\alpha -1) \geq k-2-(k-2-1) \geq 1$.

So, we may assume there is a color $c_\alpha$ that appears in at least $\lceil (m+2)/k \rceil$ of the lists assigned by $L^{(\alpha)}$ to the vertices in $ A - \bigcup^{\alpha -1}_{i=1} A_i$. Let $C_\alpha = \{v \in B_\alpha: c_\alpha \in L^{(\alpha)}(v)\}$. Since the process stopped at $t= \alpha$, we know $|C_\alpha| < \lceil (m+2)/k \rceil$. Color each vertex in $C_\alpha$ with $c_\alpha$. Then arbitrarily color $\lceil (m+2)/k \rceil - |C_\alpha|$ vertices in $A -\left(\left(\bigcup^{\alpha-1}_{i=1} A_i\right)\bigcup C_\alpha \right)$ that have $c_\alpha$ in their list with $c_\alpha$. Let $A_\alpha$ be the set of vertices colored with $c_\alpha$. For each $v \in A-\bigcup^{\alpha}_{i=1} A_i$, let $L^{(\alpha +1)}(v) = L^{(\alpha)}(v) - \{c_\alpha \}$. Note that if $v \in B_\alpha - C_\alpha$ then $c_\alpha \notin L^{(\alpha)}(v)$. Thus, for each $v \in A- \bigcup^{\alpha}_{i=1} A_i$, $|L^{(\alpha + 1)}(v)| \geq k-1-\alpha$. Note $|A -\bigcup^{\alpha}_{i=1} A_i| = m - \alpha \lceil (m+2)/k \rceil$. We know $m \leq \lceil (m+2)/k \rceil (k-1)$ which implies $m - \alpha \lceil (m+2)/k \rceil \leq \lceil (m+2)/k \rceil (k-1 -\alpha)$. Lemma~\ref{lem: construct} implies there is a proper $L^{(\alpha +1)}$-coloring of $G[A-\bigcup^{\alpha}_{i=1} A_i]$ that uses no color more than $\lceil (m+2)/k \rceil$ times. This completes a proper $L^{(1)}$-coloring of $G[A]$ that uses no color more than $\lceil (m+2)/k \rceil$ times.
\end{proof}

\vspace{5mm}

\noindent \textbf{Acknoledgement:}  The authors would like to thank Hemanshu Kaul and Michael Pelsmajer for their helpful comments on this note.  The authors would also like to thank Martin Maillard for his helpful comments that improved the readability of this note.

\end{document}